%%%%%%%%%%%%%%%%%%%%%%%%%%%%%%%%%%%%%%%%%%%%%%%%%%%%%%%%%%%%%%%%%%%%%%%%%%%%%%%%%%%%%%
%%%%%%%%%%%%%%%%%%%%%%%%%%%%%%%%%%%%%%%%%%%%%%%%%%%%%%%%%%%%%%%%%%%%%%%%%%%%%%%%%%%%%%
%%%%%%%%%%%%%%%%%%%%%%%%%%%%%%%%%%%%%%%%%%%%%%%%%%%%%%%%%%%%%%%%%%%%%%%%%%%%%%%%%%%%%%
\documentclass[a4paper,12pt,reqno]{amsart}

\textwidth 6.6truein
\textheight 8.5truein  
\oddsidemargin 0pt
\evensidemargin 0pt
\usepackage{amsfonts,amssymb,amscd,amsmath,latexsym,amsbsy,enumerate,a4wide,verbatim}

\pagestyle{myheadings}

\newtheorem{thm}{Theorem}[section]
\newtheorem{cor}[thm]{Corollary}
\newtheorem{lemma}[thm]{Lemma}
\newtheorem{prop}[thm]{Proposition}

\theoremstyle{definition}
\newtheorem{remark}[thm]{Remark}

\numberwithin{equation}{section}

\def\al{\alpha}
\def\be{\beta}
\def\ga{\gamma}
\def\de{\delta}

\def\la{\lambda}

\def\si{\sigma}
\def\vp{\varphi}

\def\Om{\Omega}

\def\La{\Lambda}

\def\R{\mathbb{R}}
\def\C{\mathbb{C}}
\def\N{\mathbb{N}}

\def\hf{\frac12}

\def\cH{\mathcal H}

\def\cU{\mathcal U}

\def\cV{\mathcal V}

\newcommand{\rphis}[5]{\,_{#1}\vp_{#2} \left( \genfrac{.}{.}{0pt}{}{#3}{#4}
\ ;#5 \right)}

%%%%%%%%%%%%%%%%%%%%%%%%%%%%%%%%%%%%%%%%%%%%%%%%%%%%%%%%%%%%%%%%%%%%%%%%%%%%%%%%%%%%%%
%%%%%%%%%%%%%%%%%%%%%%%%%%%%%%%%%%%%%%%%%%%%%%%%%%%%%%%%%%%%%%%%%%%%%%%%%%%%%%%%%%%%%%
%%%%%%%%%%%%%%%%%%%%%%%%%%%%%%%%%%%%%%%%%%%%%%%%%%%%%%%%%%%%%%%%%%%%%%%%%%%%%%%%%%%%%%

\title{Spectral decomposition and matrix-valued orthogonal polynomials}
\author{Wolter Groenevelt, Mourad E.H. Ismail, Erik Koelink}
\date{\today}
\address{Technische Universiteit Delft, DIAM, EWI, 
Postbus 5031,
2600 GA Delft,
the Netherlands}
\email{w.g.m.groenevelt@tudelft.nl}
\address{
%City University of Hong Kong, Department of Mathematics, 83 Tat Chee Avenue, Kowloon, Hong Kong
Department of Mathematics, University of Central Florida, Orlando FL 32816, USA,
and King Saud University, Riyadh, Saudi
Arabia.}
% \email{meismail@cityu.edu.hk}
\email{ismail@math.ucf.edu}
\address{Radboud Universiteit Nijmegen, IMAPP, FNWI, Heyendaalseweg 135, 6525 AJ Nijmegen,
 the Netherlands}
\email{e.koelink@math.ru.nl}
 
\begin{document}
\begin{abstract} The relation between the spectral decomposition of a self-adjoint operator which is 
realizable as a higher order recurrence operator and matrix-valued orthogonal polynomials is investigated. 
A general construction of such operators from scalar-valued orthogonal polynomials is presented. Two 
examples of matrix-valued orthogonal polynomials  with explicit orthogonality relations and three-term 
recurrence relation are presented, which both can be considered as $2\times 2$-matrix-valued analogues of 
subfamilies of Askey-Wilson polynomials.
\end{abstract}

\maketitle

%%%%%%%%%%%%%%%%%%%%%%%%%%%%%%%%%%%%%%%%%%%%%%%%%%%%%%%%%%%%%%%%%%%%%%%%%%%%%%%%%%%%%%
%%%%%%%%%%%%%%%%%%%%%%%%%%%%%%%%%%%%%%%%%%%%%%%%%%%%%%%%%%%%%%%%%%%%%%%%%%%%%%%%%%%%%%
%%%%%%%%%%%%%%%%%%%%%%%%%%%%%%%%%%%%%%%%%%%%%%%%%%%%%%%%%%%%%%%%%%%%%%%%%%%%%%%%%%%%%%

\section{Introduction}\label{sec:intro}

Matrix-valued orthogonal polynomials date back to the 1950's in the work of M.G.~Krein, see e.g. 
references in \cite{DamaPS}, \cite{DuraLR}. More recently, matrix-valued orthogonal polynomials are 
studied from an analytic point of view. In particular, analogues of many classical results in the theory 
of ordinary (scalar-valued) orthogonal polynomials have been generalized to the situation of 
the matrix-valued orthogonal polynomials, such as e.g. the three-term recurrence relation, the 
spectral theorem (Favard), theorems of Markov, Blumenthal, etc., see the overviews \cite{DamaPS},
\cite{DuraLR} and references given there. 
Many examples of the general theory of matrix-valued orthogonal polynomials are motivated by 
matrix-valued differential equations, see also \cite{GrunT}. Some of these examples are motivated 
from the well-known families of orthogonal polynomials in the Askey scheme \cite{KoekS}, so the 
matrix-valued weight function is given by the scalar weight function times a suitable matrix-valued 
function. So in this case matrix-valued analogues of classical orthogonal polynomials, such as 
Jacobi, Laguerre and Hermite polynomials, are obtained. 
This theory sofar gives matrix-valued analogues of hypergeometric orthogonal polynomials. Very 
little is known about matrix-valued analogues of $q$-orthogonal polynomials.  

Another way of obtaining matrix-valued orthogonal polynomials is from group theory using 
matrix-valued spherical functions. An important case study has been given by Gr\"unbaum, Pacharoni 
and Tirao \cite{GrunPT}, in which they obtain matrix-valued orthogonal polynomials from the 
symmetric pair 
$(SU(3),U(2))$ by studying eigenfunctions to invariant matrix-valued differential operators.
Again these matrix-valued orthogonal polynomials are analogues of a subfamily of Jacobi polynomials. In 
\cite{KoelRvP}, \cite{KoelRvP2} a different approach to such a group-theoretic approach has led to 
matrix-valued Chebyshev polynomials including relevant group theoretic interpretations 
of the construction, the three-term recurrence relation, weight function, differential 
equations, etc, using the symmetric pair 
$(SU(2)\times SU(2), SU(2))$. Again, in these cases the weight function resembles the 
corresponding scalar weight function times a suitable matrix-valued function. Again, 
no $q$-matrix-valued orthogonal polynomials have yet emerged from this approach. 

In this paper we discuss a new way to obtain matrix-valued orthogonal polynomials with an explicit 
three-term recurrence relation as well as explicit orthogonality relations. 
In the examples it is clear that the weight function is not of the form of a classical weight 
function times a matrix-valued function. The idea is to look for the spectral decomposition 
of a self-adjoint operator which can also be realized as a higher order recurrence operator. 
In order to motivate the construction, we first note that if we consider an operator which can be realized 
as a $2N+1$-recurrence operator, the case $N=0$ corresponds to eigenfunctions. 
The case $N=1$ is the case of 
the $J$-matrix (or tridiagonalization) method, which is used in physics to determine the 
spectrum of certain physically 
relevant operators, see \cite{IsmaK-AAM}, \cite{IsmaK-toappear} and references given there. 
In \cite{IsmaK} a more general 
method to obtain suitable tridiagonalizable operators is discussed. In this paper we restrict 
ourselved to 
self-adjoint operators that can be realized as $5$-term recurrence operators and for which we 
have an explicit spectral decomposition. We show in Theorem \ref{thm:genorth2t2MVOP} how this 
gives rise to $2\times 2$-matrix-valued orthogonal polynomials with an explicit (matrix-valued) 
three-term recurrence relation and explicit matrix-valued orthogonality relations. 
Because of computability reasons we stick to the $2\times 2$-case, but we expect that it is possible to 
extend to larger size matrices. In Section \ref{sec:exrhots} we discuss an explicit example 
with an easy matrix-valued three-term recurrence relation, but an involved, but explicit, 
expression for the matrix-valued weight function. In Section \ref{sec:genexamples} we 
discuss a general set-up, which is motivated by \cite{IsmaK}, and we work out a specific 
example in Section \ref{ssec:exlittleqJacobipols} which is related to the example in 
\cite[\S 4]{IsmaK}. This motivates us to view the family of matrix-valued orthogonal 
polynomials discussed in the 
example of Section \ref{ssec:exlittleqJacobipols} as analogues of a subfamily of Askey-Wilson polynomials. 

As is well-known, it is very hard in general to obtain explicit expressions for the 
orthogonality measures or weights for orthogonal polynomials defined by a three-term recurrence 
relation. The cases of associated classical orthogonal polynomials 
(in the Askey-scheme \cite{KoekS}) amply demonstrates this point, see e.g. \cite{IsmaR} for 
the case of two families of the associated Askey-Wilson polynomials. It is therefore remarkable 
that we can obtain in this 
setting an explicit, even though complicated, expression for both the weight function 
and the three-term recurrence relations for the 
$2\times 2$-matrix-valued orthogonal polynomials in the examples considered in this paper. 
Moreover, to our best knowledge this is the first instance of matrix-valued orthogonal 
polynomials that can be considered as matrix-valued orthogonal polynomials in a yet-unknown 
(possible) $q$-scheme of matrix-valued orthogonal polynomials, see \cite{KoekS} for the scalar 
case. Note that we do not have explicit expressions for the $2\times 2$-matrix-valued 
orthogonal polynomials, and it would be of interest to obtain such expressions for these 
polynomials in terms of (yet to be developed) matrix-valued basic hypergeometric series of 
higher type, see Tirao \cite{Tira} for the matrix-valued analogue of the classical 
hypergeometric function.

%%%%%%%%%%%%%%%%%%%%%%%%%%%%%%%%%%%%%%%%%%%%%%%%%%%%%%%%%%%%%%%%%%%%%%%%%%%%%%%%%%%%%%
%%%%%%%%%%%%%%%%%%%%%%%%%%%%%%%%%%%%%%%%%%%%%%%%%%%%%%%%%%%%%%%%%%%%%%%%%%%%%%%%%%%%%%
%%%%%%%%%%%%%%%%%%%%%%%%%%%%%%%%%%%%%%%%%%%%%%%%%%%%%%%%%%%%%%%%%%%%%%%%%%%%%%%%%%%%%%

\section{Matrix-valued orthogonal polynomials from 5-term operators}\label{sec:5trrMVOP}

In this section we study the relation between a self-adjoint operator realizable as $5$-term 
operator and corresponding $2\times 2$-matrix-valued orthogonal polynomials. The three-term 
matrix-valued recurrence relations for these polynomials follow from this realization of the 
operator, whereas the orthogonality relations for these polynomials follow from the spectral 
decomposition of the operator. The precise relation is given in Theorem \ref{thm:genorth2t2MVOP}. 

We assume that we have an operator $T$ on a Hilbert space $\cH$ of functions. For $T$ we 
typically consider a second-order difference or differential operator. We assume that $T$ 
has the following properties;
\begin{enumerate}[(a)]
\item $T$ is (a possibly unbounded) self-adjoint operator on $\cH$ (with domain $D$ in 
case $T$ is unbounded);
\item there exists an orthonormal basis $\{f_n\}_{n=0}^\infty$ of $\cH$ so that 
$f_n\in D$ in case $T$ is unbounded and so that there exist sequences 
$(a_n)_{n=0}^\infty$, $(b_n)_{n=0}^\infty$, $(c_n)_{n=0}^\infty$ of numbers with 
$a_n>0$, $c_n\in\R$,  for all $n\in \N$ so that 
\begin{equation}\label{eq:Tis5term}
T\, f_n\, = \, a_n f_{n+2} \, + \, b_n f_{n+1} \, + \, c_n f_n \, + \, 
\overline{b_{n-1}}f_{n-1} \, + \, a_{n-2}f_{n-2}.
\end{equation}
\end{enumerate}
In (b) we follow the convention that $a_{-1}=a_{-2}=b_{-1}=0$.

Next we assume that we have a suitable spectral decomposition of $T$. 
We assume that the spectrum is simple or at most of multiplicity $2$, and we leave it to 
the reader to extend to higher order spectra. We assume that the double spectrum is contained in 
$\Om_2\subset \si(T) \subset \R$, and the simple spectrum is contained in 
$\Om_1=\si(T)\setminus \Om_2\subset \R$. Consider functions $f$ defined on $\si(T)\subset \R$ so that 
$f\vert_{\Om_1} \colon \Om_1\to \C$ and $f\vert_{\Om_2} \colon \Om_2\to \C^2$.
We let $\si$ be a Borel measure on $\Om_1$ and $V\, \rho$ a $2\times 2$-matrix-valued measure 
on $\Om_2$ as in \cite[\S 1.2]{DamaPS}, so $V\colon \Om_2 \to \text{M}_2(\C)$ maps into the 
positive semi-definite matrices and $\rho$ is a positive Borel measure on $\Om_2$.
Next we consider the weighted Hilbert space $L^2(\cV)$ of such functions for which
\[
\int_{\Om_1} |f(\la)|^2\, d\si(\la) \, + \, \int_{\Om_2} f^\ast(\la) 
V(\la) f(\la)\, d\rho(\la) \, < \, \infty
\]
and we obtain $L^2(\cV)$ by modding out by the functions of norm zero. The inner product is given by
\[
\langle f, g\rangle \, = \, \int_{\Om_1} f(\la)\overline{g(\la)}\, d\si(\la) \, + \, 
\int_{\Om_2} g^\ast(\la) V(\la) f(\la)\, d\rho(\la).
\]

The final assumption is then
\begin{enumerate}
 \item[(c)] there exists a unitary map $U\colon \cH \to L^2(\cV)$ so that $UT=MU$, where $M$ is 
 the multiplication operator on $L^2(\cV)$. 
\end{enumerate}

Under the assumptions (a), (b), (c) we link the spectral measure to an orthogonality measure 
for matrix-valued orthogonal polynomials.
Apply $U$ to the $5$-term expression \eqref{eq:Tis5term} for $T$ on the basis 
$\{f_n\}_{n=0}^\infty$, so that 
\begin{multline}\label{eq:5trrforUfn}
\la (Uf_n)(\la)\, = \, a_n (Uf_{n+2})(\la) \, + \, b_n (Uf_{n+1})(\la) \, 
\\ +\, c_n (Uf_n)(\la) \, + \, \overline{b_{n-1}}(Uf_{n-1})(\la) \, + \, a_{n-2}(Uf_{n-2})(\la)
\end{multline}
to be interpreted as an identity in $L^2(\cV)$. Restricted to $\Om_1$ \eqref{eq:5trrforUfn} 
is a scalar identity, and restricted to $\Om_2$ the components of 
$Uf(\la) = (U_1f(\la),U_2f(\la))^t$ satisfy \eqref{eq:5trrforUfn}. 

In general, a $2N+1$-term recurrence relation can be solved using $N\times N$-matrix-valued 
orthogonal polynomials, see Dur\'an and Van Assche \cite{DuraVA}. Working out the details 
for $N=2$, we see that we have to generate the 
$2\times 2$-matrix-valued polynomials by 
\begin{equation}\label{eq:3trr2by2MVOP}
\begin{split}
 \la\, P_n(\la) \, &= \, A_n\, P_{n+1}(\la)
\, +\, B_n P_n(\la)
\, +\, A_{n-1}^\ast P_{n-1}(\la), \\ 
A_n\, &= \, \begin{pmatrix} a_{2n} & 0 \\ b_{2n+1} & a_{2n+1} \end{pmatrix}, \qquad
B_n\, = \, \begin{pmatrix} c_{2n} & b_{2n} \\ \overline{b_{2n}} & c_{2n+1} \end{pmatrix}
\end{split}
\end{equation}
with initial conditions $P_{-1}(\la)=0$ and $P_0(\la)$ is a constant non-singular matrix, 
which we take to be the identity, so $P_0(\la)=I$. Note that $A_n$ is a non-singular 
matrix and $B_n$ is a Hermitian matrix for all $n\in\N$. Then the $\C^2$-valued functions 
\begin{equation*}
% \cU_n \colon \R \to \C^2, \qquad 
\cU_n(\la)\, =\, \begin{pmatrix} Uf_{2n}(\la) \\ Uf_{2n+1}(\la) \end{pmatrix}, \qquad
\cU^1_n(\la)\, =\, \begin{pmatrix} U_1f_{2n}(\la) \\ U_1f_{2n+1}(\la) \end{pmatrix}, \qquad
\cU^2_n(\la)\, =\, \begin{pmatrix} U_2f_{2n}(\la) \\ U_2f_{2n+1}(\la) \end{pmatrix}
\end{equation*}
satisfy \eqref{eq:3trr2by2MVOP} for vectors for $\la\in \Om_1$ in the first case and 
for $\la\in \Om_2$ in the last cases. Hence, 
\begin{equation}\label{eq:Un=PnU0}
\cU_n(\la)\, = \, P_n(\la)  \cU_0(\la),  \qquad
\cU_n^1(\la)\, = \, P_n(\la)  \cU^1_0(\la), \quad
\cU_n^2(\la)\, = \, P_n(\la)  \cU^2_0(\la), 
\end{equation}
where the first holds $\si$-a.e. and the last two hold $\rho$-a.e.
We can now state the orthogonality relations for the matrix-valued orthogonal polynomials. 

\begin{thm}\label{thm:genorth2t2MVOP} With the assumptions (a), (b), (c) as given above, 
the $2\times 2$-matrix-valued polynomials $P_n$ generated by \eqref{eq:3trr2by2MVOP} 
and $P_{-1}(\la)=0$, $P_0(\la)=I$  satisfy 
\[
\int_{\Om_1} P_n(\la) \, W_1(\la)\, P_m(\la)^\ast\, d\si(\la)\, + \, 
\int_{\Om_2} P_n(\la) \, W_2(\la)\, P_m(\la)^\ast\, d\rho(\la)
=\, \de_{nm} I 
\]
where 
\begin{equation*}
W_1(\la)\, = \,  \begin{pmatrix} |Uf_0(\la)|^2 & Uf_0(\la)\overline{Uf_1(\la)} \\
\overline{Uf_0(\la)}Uf_1(\la) & |Uf_1(\la)|^2 \end{pmatrix}, \qquad
\si-\text{a.e.} 
\end{equation*}
and 
\begin{equation*}
W_2(\la)\, = \,  \begin{pmatrix} \langle Uf_0(\la), Uf_0(\la)\rangle_{V(\la)} 
& \langle Uf_0(\la), Uf_1(\la)\rangle_{V(\la)} \\
\langle Uf_1(\la), Uf_0(\la)\rangle_{V(\la)} & \langle Uf_1(\la), Uf_1(\la)\rangle_{V(\la)}
\end{pmatrix}, \qquad
\rho-\text{a.e.} 
\end{equation*}
where $\langle x, y\rangle_{V(\la)} = x^\ast V(\la) y$. 
\end{thm}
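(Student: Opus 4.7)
The plan is to read the claimed equality as the identity between the Gram matrix of the pairs $(Uf_{2n},Uf_{2n+1})$ in $L^2(\cV)$ and the matrix integral on the right, and then verify this by substituting the relation $\cU_n=P_n\cU_0$ from \eqref{eq:Un=PnU0}. First I would invoke unitarity of $U$ to transfer $\langle f_k,f_l\rangle_\cH=\de_{kl}$ to $\langle Uf_k,Uf_l\rangle_{L^2(\cV)}=\de_{kl}$, and package the resulting relations block-wise as the matrix identity $\bigl(\langle Uf_{2n+i},Uf_{2m+j}\rangle_{L^2(\cV)}\bigr)_{i,j=0,1}=\de_{nm}I$. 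By the definition of the $L^2(\cV)$ inner product, each entry splits into an $\Om_1$-integral (a scalar pairing against $d\si$) and an $\Om_2$-integral (the $V(\la)$-sesquilinear pairing against $d\rho$), so it suffices to match these two contributions separately with the two integrals in the statement.

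For the $\Om_1$ piece the functions $Uf_k$ are scalar and \eqref{eq:Un=PnU0} reads $\cU_n=P_n\cU_0$ as an identity of $\C^2$-valued functions $\si$-a.e. The outer product then factorises as $\cU_n\cU_m^\ast=P_n(\cU_0\cU_0^\ast)P_m^\ast$, and a direct check against the definition shows $\cU_0(\la)\cU_0(\la)^\ast=W_1(\la)$. Taking the $(i,j)$ entry and integrating against $d\si$ delivers exactly the first integral in the statement.

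The $\Om_2$ piece, which I expect to be the main obstacle, is similar in spirit but requires more careful bookkeeping. Here $Uf_k$ is $\C^2$-valued but $P_n(\la)$ is still a scalar $2\times 2$ matrix in $\la$, so \eqref{eq:Un=PnU0} holds component-wise in the multiplicity index and gives $Uf_{2n+i}=\sum_k(P_n)_{ik}Uf_k$ as an identity of $\C^2$-valued functions $\rho$-a.e. Substituting this and the analogous expansion of $Uf_{2m+j}$ into the integrand $(Uf_{2m+j})^\ast V(\la) Uf_{2n+i}$ pulls the scalar entries of $P_n$ and $P_m$ out of the sesquilinear pairing, leaving a bilinear combination whose coefficients are precisely the entries $(W_2)_{kl}=\langle Uf_k,Uf_l\rangle_{V(\la)}$. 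The remaining task is to recognise the resulting sum as the $(i,j)$ entry of $P_n W_2 P_m^\ast$, using the antilinearity of the left slot of $\langle\cdot,\cdot\rangle_V$ to route the $(P_m)_{jl}$ factors under complex conjugation, together with Hermitian symmetry of $W_2$, in order to align the indices with the matrix product. Summing the two contributions and equating with $\de_{nm}I$ then yields the claim.
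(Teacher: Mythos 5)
Your proposal is correct and follows essentially the same route as the paper: both start from the unitarity identity $\bigl(\langle Uf_{2n+i},Uf_{2m+j}\rangle_{L^2(\cV)}\bigr)_{i,j}=\de_{nm}I$, split the inner product into the $\Om_1$ and $\Om_2$ contributions, and use \eqref{eq:Un=PnU0} to factor out $P_n(\la)$ and $P_m(\la)^\ast$ from the resulting Gram matrices, identifying $\cU_0\cU_0^\ast$ with $W_1$ and the $V(\la)$-pairings of $Uf_0,Uf_1$ with $W_2$. The only difference is cosmetic bookkeeping in the $\Om_2$ step (the paper sums over the multiplicity indices of $V$ and forms outer products in the degree index, while you expand in the degree index directly), and your attention to the antilinear slot and the Hermitian symmetry of $W_2$ is exactly the index alignment the paper's computation carries out.
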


Theorem \ref{thm:genorth2t2MVOP} can be phrased more compactly, and then the generalization 
to self-adjoint operators $T$ realizable as higher order recurrence relations can be phrased 
compactly as well. Since we stick to the situation with the assumptions (a), (b), (c), the 
multiplicity of $T$ cannot be higher than $2$. Note that the matrices $W_1(\la)$ and 
$W_2(\la)$ are Gram matrices.
In particular,  
$\det(W_1(\la)) = 0$ for all $\la$. So the weight matrix $W_1(\la)$ 
is semi-definite positive with eigenvalues 
$0$ and $\text{tr}(W(\la))= |Uf_0(\la)|^2 + |Uf_1(\la)|^2>0$. Note that
\[
\text{ker}(W(\la)) \, = \, \C \begin{pmatrix} 
\overline{Uf_1(\la)} \\ -\overline{Uf_0(\la)} \end{pmatrix} \, = \, 
\begin{pmatrix} Uf_{0}(\la) \\ Uf_{1}(\la)\end{pmatrix}^\perp, \quad
\text{ker}(W(\la)-\text{tr}(W(\la))) = \C \begin{pmatrix} Uf_{0}(\la) \\ Uf_{1}(\la)\end{pmatrix}
\]
Moreover, $\det(W_2(\la)) = 0$ if and only if $Uf_0(\la)$ and $Uf_1(\la)$ are multiples of each other. 

\begin{proof} Start using the unitarity 
\begin{equation}\label{eq:startoforthoPn}
\begin{split}
\, \de_{nm} \begin{pmatrix} 1 &  0 \\ 0 & 1\end{pmatrix} \, &= \,
 \begin{pmatrix} \langle f_{2n},f_{2m}\rangle_{\cH} &  \langle f_{2n},f_{2m+1}\rangle_{\cH} 
\\ \langle f_{2n+1},f_{2m}\rangle_{\cH} & \langle f_{2n+1},f_{2m+1}\rangle_{\cH}\end{pmatrix}\\\, &= \, 
\begin{pmatrix} \langle Uf_{2n},Uf_{2m}\rangle_{L^2(\cV)} &  \langle Uf_{2n},Uf_{2m+1}\rangle_{L^2(\cV)} 
\\ \langle Uf_{2n+1},Uf_{2m}\rangle_{L^2(\cV)} & 
\langle Uf_{2n+1},Uf_{2m+1}\rangle_{L^2(\cV)}\end{pmatrix}\\
\end{split}
\end{equation}
Split each of the inner products on the right hand side of \eqref{eq:startoforthoPn} 
as a sum over two integrals, one over $\Om_1$ and the other over $\Om_2$. First the integral 
over $\Om_1$ equals
\begin{equation}\label{eq:startoforthoPnOm1}
\begin{split}
&\, \begin{pmatrix} \int_{\Om_1}Uf_{2n}(\la) \overline{Uf_{2m}(\la)}\, d\si(\la) &  
\int_{\Om_1}Uf_{2n}(\la) \overline{Uf_{2m+1}(\la)}\, 
d\si(\la) \\ \int_{\Om_1}Uf_{2n+1}(\la) \overline{Uf_{2m}(\la)}\, d\si(\la) &
\int_{\Om_1}Uf_{2n+1}(\la) \overline{Uf_{2m+1}(\la)}\, d\si(\la)\end{pmatrix} \\
=&\, \int_{\Om_1}\begin{pmatrix} Uf_{2n}(\la) \overline{Uf_{2m}(\la)} &  
Uf_{2n}(\la) \overline{Uf_{2m+1}(\la)} \\ 
Uf_{2n+1}(\la) \overline{Uf_{2m}(\la)} &
Uf_{2n+1}(\la) \overline{Uf_{2m+1}(\la)}\end{pmatrix} \, d\si(\la)\\
=&\, \int_{\Om_1} \begin{pmatrix} Uf_{2n}(\la) \\ Uf_{2n+1}(\la)\end{pmatrix}
\begin{pmatrix} Uf_{2m}(\la) \\ Uf_{2m+1}(\la)\end{pmatrix}^\ast \, d\si(\la)\\
=&\, \int_{\Om_1} P_n(\la) \begin{pmatrix} Uf_{0}(\la) \\ Uf_{1}(\la)\end{pmatrix}
\begin{pmatrix} Uf_{0}(\la) \\ Uf_{1}(\la)\end{pmatrix}^\ast P_m(\la)^\ast \, d\si(\la) \\
=&\, \int_{\Om_1} P_n(\la) W_1(\la) P_m(\la)^\ast \, d\si(\la),
\end{split}
\end{equation}
where we have used \eqref{eq:Un=PnU0}.  
For the integral over $\Om_2$ we write $Uf(\la) = (U_1f(\la), U_2f(\la))^t$ and 
$V(\la) = (v_{ij}(\la))_{i,j=1}^2$, so that the integral over $\Om_2$ can be written as 
\begin{equation}\label{eq:startoforthoPnOm2a}
\begin{split}
&\, \sum_{i,j=1}^2 \int_{\Om_2} \begin{pmatrix} U_jf_{2n}(\la) v_{ij}(\la) \overline{U_if_{2m}(\la)} 
& U_jf_{2n}(\la) v_{ij}(\la) \overline{U_if_{2m+1}(\la)} \\  
U_jf_{2n+1}(\la) v_{ij}(\la) \overline{U_if_{2m}(\la)} & 
U_jf_{2n+1}(\la) v_{ij}(\la) \overline{U_if_{2m+1}(\la)}\end{pmatrix} d\rho(\la) \\
=&\, \sum_{i,j=1}^2 \int_{\Om_2} \begin{pmatrix} U_j f_{2n}(\la) \\ U_j f_{2n+1}(\la)\end{pmatrix}
\begin{pmatrix} U_i f_{2m}(\la) \\ U_i f_{2m+1}(\la)\end{pmatrix}^\ast v_{ij}(\la)\, d\rho(\la) \\
=&\, \sum_{i,j=1}^2 \int_{\Om_2} P_n(\la) \begin{pmatrix} U_j f_{0}(\la) \\ U_j f_{1}(\la)\end{pmatrix}
\begin{pmatrix} U_i f_{0}(\la) \\ U_i f_{1}(\la)\end{pmatrix}^\ast  P_m(\la)^\ast v_{ij}(\la)\, d\rho(\la) 
\,\\ =&\,  \int_{\Om_2} P_n(\la) W_2(\la)  P_m(\la)^\ast \, d\rho(\la), \\
\end{split}
\end{equation}
where we have used \eqref{eq:Un=PnU0} again and with 
\begin{equation}\label{eq:startoforthoPnOm2b}
\begin{split}
W_2(\la) \, &=\, \sum_{i,j=1}^2 \begin{pmatrix} U_j f_{0}(\la) \\ U_j f_{1}(\la)\end{pmatrix}
\begin{pmatrix} U_i f_{0}(\la) \\ U_i f_{1}(\la)\end{pmatrix}^\ast v_{ij}(\la) \,\\ 
&=\, \sum_{i,j=1}^2 v_{ij}(\la) \begin{pmatrix} U_j f_{0}(\la)\overline{U_if_0(\la)} 
& U_j f_{0}(\la)\overline{U_if_1(\la)} \\
U_j f_{1}(\la)\overline{U_if_0(\la)} & 
U_j f_{1}(\la)\overline{U_if_1(\la)}
\end{pmatrix} \\
\, &=\, \begin{pmatrix} (Uf_0(\la))^\ast V(\la) Uf_0(\la) & (Uf_1(\la))^\ast V(\la) Uf_0(\la) \\
(Uf_0(\la))^\ast V(\la) Uf_1(\la) & (Uf_1(\la))^\ast V(\la) Uf_1(\la) \end{pmatrix}
\end{split}
\end{equation}
and putting \eqref{eq:startoforthoPnOm1} and \eqref{eq:startoforthoPnOm2a}, 
\eqref{eq:startoforthoPnOm2b} into \eqref{eq:startoforthoPn} proves the result.
\end{proof}

In case we additionally assume $T$ is bounded, so that the measures $\si$ and $\rho$ 
have compact support, the coefficients in \eqref{eq:Tis5term} and \eqref{eq:3trr2by2MVOP} 
are bounded. In this case the corresponding moment problem is determinate, see 
\cite[Thm.~2.11]{DamaPS}, and Theorem \ref{thm:genorth2t2MVOP} gives the explicit 
expression for the weight function. 

\begin{remark}
Assume that $\Om_1=\si(T)$ or $\Om_2=\emptyset$, so that $T$ has simple spectrum. Then 
\begin{equation}\label{eq:defL2Wsigma}
\mathcal{L}^2(W_1d\si) \, = \, \{ f\colon \R \to \C^2 \mid %f\text{ measurable}, 
\int_\R f(\la)^\ast W_1(\la) f(\la) \, d\si(\la)<\infty\} 
\end{equation}
has the subspace of null-vectors 
\begin{multline*}
\mathcal{N}\, = \, \{ f\in \mathcal{L}^2(W_1d\si) \mid 
\int_\R f(\la)^\ast W(\la) f(\la) \, d\si(\la) = 0\} \\ 
\, = \, \{ f\in \mathcal{L}^2(W_1d\si) \mid f(\la) = c(\la) \begin{pmatrix} 
\overline{Uf_1(\la)} \\ -\overline{Uf_0(\la)} \end{pmatrix} \text{ $\si$-a.e.}\}, 
\end{multline*}
where $c$ is a scalar-valued function. In this case  
$L^2(\cV)=\mathcal{L}^2(W_1d\si)/\mathcal{N}$. Note that 
$\mathcal{U}_n\colon \R\to L^2(W_1d\si)$ is completely determined by $Uf_0(\la)$,
 which is a restatement of $T$ having simple spectrum. 
From Theorem \ref{thm:genorth2t2MVOP} we see that 
\[
\langle P_n(\cdot)v_1, P_m(\cdot)v_2\rangle_{L^2(W_1d\si)} \, = \, 
\de_{nm} \langle v_1, v_2\rangle
\]
so that $\{ P_n(\cdot)e_i\}_{i\in\{1,2\}, n\in \N}$ is linearly independent in 
$L^2(W_1d\si)$ for any basis $\{e_1,e_2\}$ of
$\C^2$. 
\end{remark}

%%%%%%%%%%%%%%%%%%%%%%%%%%%%%%%%%%%%%%%%%%%%%%%%%%%%%%%%%%%%%%%%%%%%%%%%%%%%%%%%%%%%%%
%%%%%%%%%%%%%%%%%%%%%%%%%%%%%%%%%%%%%%%%%%%%%%%%%%%%%%%%%%%%%%%%%%%%%%%%%%%%%%%%%%%%%%
%%%%%%%%%%%%%%%%%%%%%%%%%%%%%%%%%%%%%%%%%%%%%%%%%%%%%%%%%%%%%%%%%%%%%%%%%%%%%%%%%%%%%%

\section{A general class of examples}\label{sec:genexamples}

In \cite{IsmaK} we have studied a general procedure to obtain self-adjoint 
tridiagonalizable operators, and in this section  we show how to extend this to obtain 
self-adjoint operators  which can be realized as $5$-term recurrence. This brings us back to 
the situation of Section \ref{sec:5trrMVOP}, hence leading to $2\times 2$-matrix-valued  
orthogonal polynomials. Of course, we still need to obtain the 
spectral decomposition of such operators as well. 
We extend \cite[\S 2]{IsmaK} in Section \ref{ssec:SA-5termdiag-operators} and we present an 
example of the construction using little $q$-Jacobi polynomials in Section 
\ref{ssec:exlittleqJacobipols}. The analogue of the Jacobi polynomials is rather involved, 
in particular the spectral decomposition, and this is worked out in \cite{GroeK}.

\subsection{Self-adjoint penta-diagonalisable operators}\label{ssec:SA-5termdiag-operators}

Let $\mu$ and $\nu$ be positive Borel measures with finite moments on the real line $\R$
so that $\mu$ is absolutely continuous with respect to $\nu$. 
% corresponding to determinate moment problems. 
Let $r = \frac{d\mu}{d\nu}$ be the Radon-Nikodym derivative, so $r\geq 0$.
We assume that we have a (possibly unbounded) self-adjoint operator 
$L$ on $L^2(\mu)$ preserving the space of polynomials in $L^2(\mu)$ and the 
existence of an orthonormal basis $\{\Phi_n\}_{n\in\N}$ of $L^2(\mu)$
of polynomial eigenfunctions of $L$, so 
$L\Phi_n = \la_n \Phi_n$, $\la_n\in\R$. 
Moreover, we assume the existence of an orthonormal basis $\{\phi_n\}_{n=0}^\infty$ of polynomials in 
of $L^2(\nu)$ such that for all $n\in\N$ 
\begin{equation}\label{eq:phicombinationofPhi}
\phi_n \, = \, \al_n\, \Phi_n \, + \, \be_n \, \Phi_{n-1} \, + \, \ga_n \, \Phi_{n-2}, 
\qquad \al_n, \be_n, \ga_n\in \R
\end{equation}
(with the convention $\be_0=\ga_0=\ga_1=0$). We assume that the polynomials 
are dense in $L^2(\mu)$ and $L^2(\nu)$. 
Finally we assume that the Radon-Nikodym derivative $r$ is a polynomial, 
necessarily at most of degree $2$ by \eqref{eq:phicombinationofPhi}. 
We denote by $M(r)$ and $M(x)$  the multiplication operator by $r$ and by $x$.   

\begin{lemma}\label{lem:Tsiis5trr} 
$T^\rho = M(r) \bigl(L + \rho\bigr)$, $\rho\in\R$,
is a symmetric five-diagonal operator on $L^2(\nu)$ with respect to 
the orthonormal basis $\{\phi_n\}_{n=0}^\infty$;
\[
T^\rho\, \phi_n \, = \,  \, a_n \phi_{n+2} \, + \, \tilde{b}_n \phi_{n+1} 
\, + \, \tilde{c}_n \phi_n \, + \, \tilde{b}_{n-1}\phi_{n-1} \, + \, a_{n-2}\phi_{n-2}
\]
where 
\begin{equation*}
\begin{split}
a_n\, &= \, \al_n\ga_{n+2}(\la_n+\rho), \quad
\tilde{b}_n\, = \, \al_n\be_{n+1}(\la_n+\rho)+\be_n(\la_{n-1}+\rho)\ga_{n+1}, \\ 
\tilde{c}_n\, &= \, \al_n^2(\la_n+\rho)+\be_n^2(\la_{n-1}+\rho) + \ga_n^2(\la_{n-2}+\rho).
\end{split}
\end{equation*}
\end{lemma}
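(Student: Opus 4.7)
The plan is to reduce every computation on $L^2(\nu)$ to a computation on $L^2(\mu)$ using the Radon–Nikodym identity $d\mu = r\, d\nu$. Concretely, because $r$ is real-valued and the polynomials lie in both $L^2(\mu)$ and $L^2(\nu)$, one has, for any two polynomials $p,q$,
\[
\langle M(r)p, q\rangle_{L^2(\nu)} \;=\; \int r\,p\,\overline{q}\,d\nu \;=\; \int p\,\overline{q}\,d\mu \;=\; \langle p,q\rangle_{L^2(\mu)}.
\]
Applied with $p = (L+\rho)\phi_n$ and $q = \phi_m$, this gives the master identity
\[
\langle T^\rho \phi_n, \phi_m\rangle_{L^2(\nu)} \;=\; \langle (L+\rho)\phi_n, \phi_m\rangle_{L^2(\mu)},
\]
which is the workhorse of the proof. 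Symmetry of $T^\rho$ on $\mathrm{span}\{\phi_n\}$ is then immediate: the right-hand side equals $\langle \phi_n, (L+\rho)\phi_m\rangle_{L^2(\mu)} = \langle \phi_n, T^\rho\phi_m\rangle_{L^2(\nu)}$ by self-adjointness of $L$, noting that $\phi_n, \phi_m$ lie in the domain of $L$ because they are finite linear combinations of the eigenvectors $\Phi_k$.

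For the explicit five-band structure, I would apply $L+\rho$ to \eqref{eq:phicombinationofPhi} and use $L\Phi_k = \lambda_k\Phi_k$ to write
\[
(L+\rho)\phi_n \;=\; \alpha_n(\lambda_n+\rho)\Phi_n \,+\, \beta_n(\lambda_{n-1}+\rho)\Phi_{n-1} \,+\, \gamma_n(\lambda_{n-2}+\rho)\Phi_{n-2},
\]
and similarly expand $\phi_m$. Substituting into the master identity and invoking the orthonormality $\langle \Phi_k,\Phi_l\rangle_{L^2(\mu)}=\delta_{kl}$ collapses the resulting double sum to those pairs of indices $(i,j)\in\{0,1,2\}^2$ for which $n-i = m-j$. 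This forces $|n-m|\le 2$, so $T^\rho$ is automatically five-diagonal, and the surviving terms produce exactly the stated coefficients: the case $m=n+2$ gives only $(i,j)=(0,2)$ and yields $a_n = \alpha_n\gamma_{n+2}(\lambda_n+\rho)$; the case $m=n+1$ admits $(0,1)$ and $(1,2)$ and yields $\tilde b_n$; and the case $m=n$ admits $(0,0),(1,1),(2,2)$ and yields $\tilde c_n$.

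The subdiagonal entries at $m = n-1, n-2$ need not be computed separately: they are determined by the Hermitian symmetry already established, together with the fact that $a_n$, $\tilde b_n$, $\tilde c_n$ are real (the $\alpha_n,\beta_n,\gamma_n$ and $\lambda_k$ are all real). This matches the statement, since under the indexing convention $\beta_0=\gamma_0=\gamma_1=0$ one has $\overline{\tilde b_{n-1}}=\tilde b_{n-1}$ and $\overline{a_{n-2}}=a_{n-2}$. I do not expect a serious obstacle: the only delicate point is careful bookkeeping of the index shifts (to confirm that the lower bands do reproduce $\tilde b_{n-1}$ and $a_{n-2}$ with the correct conventions at the boundary $n\le 2$), and a brief check that the degree hypothesis on $r$—forced by \eqref{eq:phicombinationofPhi} via $\langle \phi_n,1\rangle_\nu = \langle\Phi_n,r\rangle_\mu$ for $n\ge 3$—is consistent with $T^\rho$ mapping polynomials of degree $n$ into polynomials of degree $n+2$.
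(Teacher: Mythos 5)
Your proof is correct and is essentially the computation the paper has in mind: the paper's own ``proof'' is just a pointer to the analogous tridiagonalization argument in \cite[\S 2.1]{IsmaK}, which rests on exactly your master identity $\langle M(r)p,q\rangle_{L^2(\nu)}=\langle p,q\rangle_{L^2(\mu)}$ followed by expanding $(L+\rho)\phi_n$ and $\phi_m$ in the $\Phi_k$ and using their orthonormality in $L^2(\mu)$. The only blemish is the closing aside: the identity $\langle\phi_n,1\rangle_\nu=\langle\Phi_n,r\rangle_\mu$ is not correct as written (the degree bound on $r$ instead follows from $0=\langle\phi_n,\Phi_m\rangle_{L^2(\mu)}=\langle\phi_n,r\Phi_m\rangle_{L^2(\nu)}$ for $m\le n-3$), but this concerns a standing assumption of the setup, not the lemma, and does not affect your argument.
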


\begin{proof} This is completely analogous to \cite[\S 2.1]{IsmaK}.
\end{proof}

Since the orthonormal basis $\{\phi_n\}_{n=0}^\infty$ of $L^2(\nu)$ consists of polynomials, we have 
\begin{equation}\label{eq:3trforgeneralcase}
x\phi_n(x)\, =\, \theta_n \phi_{n+1}(x)\, + \, \xi_n\phi_n(x) \, + \, \theta_{n-1} \phi_{n-1}(x), 
\end{equation}
for $\theta_n,\xi_n\in\R$,  $\theta_n\not=0$ for all $n\in\N$ and the convention $\theta_{-1}=0$. 

\begin{cor}\label{cor:lemTsiis5trr} 
$T^{\rho,\tau} = M(r) \bigl(L + \rho\bigr)+\tau M(x)$, $\rho,\tau\in\R$,
is a symmetric five-diagonal operator on $L^2(\nu)$ with respect to the 
orthonormal basis $\{\phi_n\}_{n=0}^\infty$;
\[
T^{\rho,\tau}\, \phi_n \, = \,  \, a_n \phi_{n+2} \, + \, b_n \phi_{n+1} \, 
+ \, c_n \phi_n \, + \, b_{n-1}\phi_{n-1} \, + \, a_{n-2}\phi_{n-2}
\]
where $b_n =  \tilde{b}_n  + \tau\theta_n$, 
$c_n =  \tilde{c}_n +  \tau\xi_n$, and the notation as in Lemma \ref{lem:Tsiis5trr}.
\end{cor}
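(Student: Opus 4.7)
The plan is to exploit linearity: since $T^{\rho,\tau} = T^\rho + \tau M(x)$, I would apply $T^{\rho,\tau}$ to $\phi_n$ and sum the contribution already computed in Lemma \ref{lem:Tsiis5trr} with the contribution of $\tau M(x)$ read off from the three-term recurrence \eqref{eq:3trforgeneralcase}.

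Concretely, Lemma \ref{lem:Tsiis5trr} gives
\[
T^\rho \phi_n = a_n\phi_{n+2} + \tilde b_n \phi_{n+1} + \tilde c_n \phi_n + \tilde b_{n-1}\phi_{n-1} + a_{n-2}\phi_{n-2},
\]
while \eqref{eq:3trforgeneralcase} gives $\tau M(x)\phi_n = \tau\theta_n\phi_{n+1} + \tau\xi_n\phi_n + \tau\theta_{n-1}\phi_{n-1}$. Adding and collecting coefficients in front of $\phi_{n+1}$, $\phi_n$, $\phi_{n-1}$ produces exactly $b_n = \tilde b_n + \tau\theta_n$, $c_n = \tilde c_n + \tau\xi_n$, and on the subdiagonal $\tilde b_{n-1}+\tau\theta_{n-1}$, which is $b_{n-1}$ after the index shift $n\mapsto n-1$ in the definition of $b_n$. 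The outer coefficients $a_n$ and $a_{n-2}$ are unaffected, so the five-diagonal shape is preserved.

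For symmetry I would note that $M(x)$ is self-adjoint on $L^2(\nu)$ (multiplication by a real-valued function), $\tau\in\R$, so $\tau M(x)$ is symmetric on the dense space of polynomials, and $T^\rho$ is symmetric by Lemma \ref{lem:Tsiis5trr}; hence $T^{\rho,\tau}$ is symmetric as a sum of two symmetric operators on this common domain. Since $\theta_n,\xi_n\in\R$ and $\tilde b_n,\tilde c_n\in\R$ by Lemma \ref{lem:Tsiis5trr}, the new coefficients $b_n$ and $c_n$ remain real, consistent with the symmetric five-diagonal form required in \eqref{eq:Tis5term}.

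There is essentially no obstacle here; the corollary is a direct book-keeping exercise combining Lemma \ref{lem:Tsiis5trr} with the recurrence \eqref{eq:3trforgeneralcase}. The only point worth a moment of care is the consistency check that the coefficient appearing on the subdiagonal is the complex conjugate (here, simply the real value) of the coefficient on the superdiagonal shifted by one, which is automatic since both receive the same $\tau\theta_{n-1}$ correction.
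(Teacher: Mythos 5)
Your proof is correct and is exactly the argument the paper intends (the paper gives no separate proof, the corollary being an immediate consequence of Lemma \ref{lem:Tsiis5trr} together with the three-term recurrence \eqref{eq:3trforgeneralcase}): you add the tridiagonal contribution of $\tau M(x)$ to the five-diagonal expression for $T^\rho$ and collect coefficients. The bookkeeping on the subdiagonal and the reality/symmetry remarks are all in order.
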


Note that in case $L$ is a second-order differential or difference operator, then so is $T$.  
However, the  coefficients of $T$ get more complicated and in order to carry through the programme 
of Section \ref{sec:5trrMVOP} we need to be able to calculate the  spectral decomposition of 
$T^{\rho,\tau}$ for suitable $\rho$, $\tau$ as well in another way. 

\begin{remark}\label{rmk:extension} It is clear that we can extend this to higher order 
recurrences. So if we assume $r$ to be a polynomial of degree $N$ and the 
recursion \eqref{eq:phicombinationofPhi} to have $N+1$ terms, we end with a $2N+1$-recursion 
for the operator in Lemma \ref{lem:Tsiis5trr} and Corollary \ref{cor:lemTsiis5trr}. 
\end{remark}

%%%%%%%%%%%%%%%%%%%%%%%%%%%%%%%%%%%%%%%%%%%%%%%%%%%%%%%%%%%%%%%%%%%%%%%%%%%%%%%%%%%%%%
%%%%%%%%%%%%%%%%%%%%%%%%%%%%%%%%%%%%%%%%%%%%%%%%%%%%%%%%%%%%%%%%%%%%%%%%%%%%%%%%%%%%%%
%%%%%%%%%%%%%%%%%%%%%%%%%%%%%%%%%%%%%%%%%%%%%%%%%%%%%%%%%%%%%%%%%%%%%%%%%%%%%%%%%%%%%%

\subsection{Example: case of little $q$-Jacobi polynomials}\label{ssec:exlittleqJacobipols}

We work out the details of the general programme of Section \ref{ssec:SA-5termdiag-operators} for 
the case of the little $q$-Jacobi polynomials, cf. \cite[\S 4]{IsmaK}. 
Let, as usual, $0<q<1$, and we follow standard notation for basic hypergeometric series as in
\cite{GaspR}, see also \cite{Isma}, \cite{KoekS}.

The little $q$-Jacobi polynomials are
\begin{equation}\label{eq:deflittleqJacobipols}
p_n(x)\, = \, p_n(x;a,b;q) \, = \, \rphis{2}{1}{q^{-n},\, abq^{n+1}}{aq}{q, qx}
\end{equation}
with leading coefficient
\begin{equation}
l_n(a,b)\, = \, (-1)^n q^{-\hf n(n-1)} \frac{(abq^{n+1};q)_n}{(aq;q)_n}
\end{equation}
and for $0<a<q^{-1}$, $b<q^{-1}$ the little $q$-Jacobi polynomials 
satisfy the orthogonality relations 
\begin{equation}\label{eq:orthorellittleqJacobipols}
\begin{split}
&\qquad\qquad  \sum_{k=0}^\infty p_n(q^k)p_m(q^k)\, w_k(a,b) \, =\, \de_{nm} h_n(a,b), \\
&w_k(a,b)\, =\, (aq)^k \frac{(bq;q)_k\, (aq;q)_\infty}{(q;q)_k\, (abq^2;q)_\infty}, \quad
h_n(a,b)\, = \, \frac{1-abq}{1-abq^{n+2}} \frac{(q,bq;q)_n}{(aq,abq;q)_n} (aq)^n
\end{split} 
\end{equation}
normalising $h_0(a,b)=1$. The little $q$-Jacobi polynomials satisfy 
\begin{equation}\label{eq:differenceoplittleqJacobipols}
\begin{split}
&L^{(a,b)} p_n(\cdot;a,b;q) \, = \, \la_n \, p_n(\cdot;a,b;q), \qquad 
\la_n = \la_n(a,b) = q^{-n}(1-q^n)(1-abq^{n+1})\\
&\bigl(L^{(a,b)}f\bigr)(x)\, = \, 
\frac{a(bqx-1)}{x}\bigl(f(qx)-f(x)\bigr) + \frac{x-1}{x}\bigl(f(\frac{x}{q})-f(x)\bigr)
\end{split} 
\end{equation}

In the context of Section \ref{ssec:SA-5termdiag-operators} we take $L^2(\mu)$, 
respectively $L^2(\nu)$, to be the weighted $L^2$-space corresponding to the case $(aq,bq)$, 
respectively $(a,b)$. Note that
\begin{equation}\label{eq:relwabandwaqbq}
w_k(aq,bq) = r(q^k) w_k(a,b), \quad r(x) = K^{-1} x(1-bqx), \ \ 
K = \frac{(1-aq)(1-bq)}{(1-abq^2)(1-abq^3)}>0. 
\end{equation}

In the context of Section \ref{ssec:SA-5termdiag-operators} we see that we can 
give a five-term recursion formula for the operator $T^{\rho,\tau}$ defined by 
\begin{equation}\label{eq:TsitauforlittleqJacobipols}
\begin{split}
&\bigl( T^{\rho,\tau}f\bigr)(x) \, = \, \frac{aq}{K}(1-bqx)(bq^2x-1)\bigl( f(qx)-f(x)\bigr) 
\\ &\, +\, \frac{1}{K}(1-bqx)(x-1)\bigl( f(x/q)-f(x)\bigr)\, +\, x(\frac{\rho}{K}(1-bqx)+\tau)f(x).
\end{split}
\end{equation}
In order to apply the link to $2\times 2$-matrix-valued orthogonal polynomials we 
need to give the spectral decomposition of $T^{\rho,\tau}$ on $L^2(\nu)$ in another way. 

\begin{prop} \label{prop:TspectralcontdualqHahnpls}
Assume $q^{-1} > b > 0$.
The operator $T^{\rho,\tau}$ for $\rho =(1+q\sqrt{ab})(1+q^2\sqrt{ab})$, 
$\tau = \frac{1}{K}\bigl( q\sqrt{ab}(3+2q+bq^2)-bq(1+aq)\bigr)$ is a bounded self-adjoint 
operator with explicit spectral decomposition given by $U\colon L^2(\nu) \to L^2(\si)$ and 
$UT=MU$, where $M$ is multiplication by $2x/(\sqrt{aq}+(1-K)/\sqrt{aq})$ and $\si$ is the 
normalized orthogonality measure for the continuous dual $q$-Hahn polynomials 
\cite{KoekS} with 
parameters $(A,B,C)=(\sqrt{qb},\sqrt{qb}, q\sqrt{qb})$, and $U$ is given by 
\[
U\colon \frac{\de_{q^k}}{\sqrt{w_k(a,b)}}\, \mapsto \, P_n(\cdot;  \sqrt{qb},\sqrt{qb}, q\sqrt{qb}\mid q)
\]
using the orthonormal polynomials on the right hand side. 
\end{prop}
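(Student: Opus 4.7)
The plan is to diagonalise $M(x)$ in $L^2(\nu)$ via the point-mass basis and then identify $T^{\rho,\tau}$ with the Jacobi operator of the continuous dual $q$-Hahn polynomials. Since $\nu$ from \eqref{eq:orthorellittleqJacobipols} is supported on $\{q^k\}_{k=0}^\infty$, the normalised point masses $e_k = \delta_{q^k}/\sqrt{w_k(a,b)}$ form an orthonormal basis of $L^2(\nu)$, and $M(x)e_k = q^k e_k$. The first step is to verify that $T^{\rho,\tau}$ acts tridiagonally in the basis $\{e_k\}$. Reading off \eqref{eq:TsitauforlittleqJacobipols} the coefficients of $f(qx)$, $f(x/q)$ and $f(x)$, and using that $(T^{\rho,\tau}e_k)(q^m)$ picks up contributions only for $m\in\{k-1,k,k+1\}$, one obtains an explicit tridiagonal form
\[
T^{\rho,\tau} e_k \, = \, a_k\, e_{k+1} \, + \, c_k\, e_k \, + \, a_{k-1}\, e_{k-1},
\]
with $a_k$, $c_k$ determined by evaluating the coefficient functions at $q^k,q^{k+1}$ together with the ratio $\sqrt{w_{k+1}(a,b)/w_k(a,b)}$. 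Symmetry of $T^{\rho,\tau}$ reduces to a Pearson-type identity relating the coefficients of $f(qx)$ and $f(x/q)$ via the weights $w_k(a,b)$, which is a direct computation from their product formula.

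Second, I would establish boundedness: the polynomial coefficients in \eqref{eq:TsitauforlittleqJacobipols} are bounded on $[0,1]$, and $w_{k+1}(a,b)/w_k(a,b) = aq\,(1-bq^{k+1})/(1-q^{k+1}) \to aq$, hence the sequences $(a_k)$ and $(c_k)$ are bounded. This makes $T^{\rho,\tau}$ a bounded self-adjoint operator, so the associated Hamburger moment problem is determinate by \cite[Thm.~2.11]{DamaPS} and the spectral measure is uniquely fixed by the Jacobi matrix.

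The heart of the argument is the matching step. The continuous dual $q$-Hahn polynomials $P_n(y;A,B,C\mid q)$ with $(A,B,C)=(\sqrt{qb},\sqrt{qb},q\sqrt{qb})$ satisfy a three-term recurrence of the form $2y\,P_n = \alpha_n P_{n+1} + \beta_n P_n + \alpha_{n-1} P_{n-1}$ after orthonormalisation, with $\alpha_n,\beta_n$ explicitly listed in \cite{KoekS}. I would compare
\[
\frac{2\,a_k}{\sqrt{aq}+(1-K)/\sqrt{aq}} \, \stackrel{?}{=}\, \alpha_k, \qquad
\frac{2\,c_k}{\sqrt{aq}+(1-K)/\sqrt{aq}} \, \stackrel{?}{=}\, \beta_k.
\]
The scalar $\rho$ controls the contribution of $M(r)\cdot\rho$ to $c_k$, while $\tau$ contributes $\tau q^k$ to $c_k$; the precise values $\rho=(1+q\sqrt{ab})(1+q^2\sqrt{ab})$ and $\tau=K^{-1}(q\sqrt{ab}(3+2q+bq^2)-bq(1+aq))$ are forced by demanding that the constant and $q^k$-parts in $c_k$ line up with the constant and geometric parts of the dual $q$-Hahn $\beta_k$. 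Once both equalities are verified, the unitary $U$ sending $e_k$ to the orthonormal continuous dual $q$-Hahn polynomial $P_n(\,\cdot\,;\sqrt{qb},\sqrt{qb},q\sqrt{qb}\mid q)$ intertwines the two Jacobi operators, hence intertwines $T^{\rho,\tau}$ with multiplication by $2y/(\sqrt{aq}+(1-K)/\sqrt{aq})$ on $L^2(\sigma)$.

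The main obstacle is the algebraic identification of $(a_k,c_k)$ with the normalised continuous dual $q$-Hahn recurrence coefficients. This is not conceptually difficult but is a book-keeping exercise in $q$-Pochhammer symbols; the appearance of the shifted parameter $\sqrt{ab}$ inside $\rho$ and of the factor $(1-K)/\sqrt{aq}$ in the spectral variable both reflect a quadratic transformation forced by $r$ being a degree-two polynomial, mirroring the tridiagonalisation calculation in \cite[\S 4]{IsmaK} one level up.
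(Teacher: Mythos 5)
Your proposal is correct and follows essentially the same route as the paper: pass to the orthonormal point-mass basis $e_k=\de_{q^k}/\sqrt{w_k(a,b)}$ of $L^2(\nu)$, observe that the second-order $q$-difference operator $T^{\rho,\tau}$ becomes a bounded symmetric Jacobi matrix in this basis, and match its entries (after rescaling by $\sqrt{aq}+(1-K)/\sqrt{aq}$) with the orthonormal three-term recurrence of the continuous dual $q$-Hahn polynomials, the off-diagonal entries forcing $(A,B,C)=(\sqrt{qb},\sqrt{qb},q\sqrt{qb})$ and the diagonal entries forcing the stated $\rho$ and $\tau$. The only cosmetic difference is that you make the boundedness/determinacy step explicit, whereas the paper simply reads off the continuous spectrum $[-2,2]$ from the resulting Jacobi operator.
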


\begin{remark}  Recall that in \cite{IsmaK} we can introduce an additional degree of freedom, 
which is not possible in Proposition \ref{prop:TspectralcontdualqHahnpls}. On the other hand, 
considering more generally second-order difference operators on $L^2(\nu)$ we can introduce an 
additional degree of freedom in the parameters of the continuous dual Hahn polynomials, but then 
we have no longer a nice explicit expression for the $5$-term recurrence as in Section 
\ref{sec:5trrMVOP}.
\end{remark}

\begin{proof} Let $V\colon \ell^2(\N)\to L^2(\nu)$, $e_k\mapsto \de_{q^k}/\sqrt{w_k(a,b)}$, 
be the unitary operator identifying  the Hilbert space $\ell^2(\N)$ with standard orthonormal 
basis $\{e_k\}_{k\in\N}$ with the weighted $L^2(\nu)$ space for the little $q$-Jacobi polynomials.  
Let $J^{\rho,\tau} = V^\ast T^{\rho,\tau}V$, then 
\begin{equation*}
\begin{split}
&\Bigl(\sqrt{aq}+\frac{1}{\sqrt{aq}}+ \frac{-K}{\sqrt{aq}}\Bigr)J^{\rho,\tau}e_k\, =\, 
\tilde\al_k e_{k+1} \, + \, \tilde\be_k e_k \, + \, \tilde\al_{k-1} e_{k-1}  \\
& \tilde\al_k \, = \, (1-bq^{k+1})\sqrt{(1-q^{k+1})(1-bq^{k+1})}, \\
& \tilde\be_k \, = \, q^k\Bigl( bq\sqrt{aq}(1+q) +\frac{1}{\sqrt{aq}}(1+bq)-
\frac{\rho}{\sqrt{aq}}+
\frac{K\tau}{\sqrt{aq}} \Bigr) 
+ q^{2k}\Bigl( \frac{bq}{\sqrt{aq}}(\rho-1)-b^2q^3\sqrt{aq}\Bigr)
\end{split}
\end{equation*}
Comparing this Jacobi operator to the three-term recurrence relation for the orthonormal 
continuous dual $q$-Hahn polynomials with parameters $(A,B,C)$ as in \cite[\S 3.3]{KoekS}, 
we see that we need $\{ AB, AC, BC\} = \{ bq, bq^2, bq^2\}$ to get the right expression 
for  $\tilde\al_k$. Since $b\not= 0$ we get $A=B$, $C=qB$, and because of symmetry we 
obtain $(A,B,C) = (\sqrt{bq},\sqrt{bq},q\sqrt{bq})$. 

In order to match the value of $\be_k$ to the orthonormal continuous $q$-Hahn polynomials  
with these parameters we require 
\begin{gather*}
 bq\sqrt{aq}(1+q) +\frac{1}{\sqrt{aq}}(1+bq)-\frac{\rho}{\sqrt{aq}}+
\frac{K\tau}{\sqrt{aq}} \, =\, A + B + C+ ABC, \\
\frac{bq}{\sqrt{aq}}(\rho-1)-b^2q^3\sqrt{aq} \, = \, ABC(1+q^{-1})
\end{gather*}
which determines the choice for $\rho$ and $\tau$. Then 
$\Bigl(\sqrt{aq}+\frac{1}{\sqrt{aq}}+ \frac{-K}{\sqrt{aq}}\Bigr)J^{\rho,\tau}$ 
has continuous spectrum $[-2,2]$, which gives the statement on $U$.
\end{proof}

Now that we have determined for which values of $(\rho,\tau)$ we have an explicit 
spectral decomposition in 
Proposition \ref{prop:TspectralcontdualqHahnpls}, we have to work out the coefficients 
in Corollary \ref{cor:lemTsiis5trr} in this case. 
We start with \eqref{eq:phicombinationofPhi} in this case, or 
equivalently
\begin{equation}\label{eq:expansionlittleaJacobi}
p_n(x;a,b;q) = a_{n,n} p_n(x;aq,bq;q) + a_{n,n-1}  p_{n-1}(x;aq,bq;q) + 
a_{n,n-2} p_{n-2}(x;aq,bq;q).
\end{equation}
By comparing leading coefficients in \eqref{eq:expansionlittleaJacobi} 
we obtain 
\begin{equation}
a_{n,n} \, = \,  \frac{(1-aq^{n+1})(1-abq^{n+1})(1-abq^{n+2})}{(1-aq)(1-abq^{2n+1})(1-abq^{2n+2})}.
\end{equation}
Using the orthogonality and \eqref{eq:relwabandwaqbq} we obtain
\begin{equation*}
\begin{split}
a_{n-2,n} h_{n-2}(aq,bq) \, &=\,  \sum_{k=0}^\infty p_n(q^k;a,b;q) p_{n-2}(q^k;aq,bq;q) r(q^k) w_k(a,b) \\
\, &=\, \text{lc}(r) \frac{l_{n-2}(aq,bq)}{l_n(a,b)}\, h_n(a,b)
\end{split}
\end{equation*}
using the expansion of $p_{n-2}(\cdot;aq,bq;q) r(\cdot)$ in terms of little 
$q$-Jacobi polynomials with parameters $(a,b)$. This gives 
\begin{equation}\label{eq:expressionann-2}
a_{n,n-2} \, = \,  \frac{-bq^{n+2}}{K}
 \frac{(1-q^{n-1})(1-q^{n})(1-bq)(1-bq^n)}{(1-abq^2)(1-abq^3)(1-abq^{2n-1})(1-abq^{2n})}.
\end{equation}
Note that \eqref{eq:expressionann-2} is not clear from the general connection 
coefficient formula for little $q$-Jacobi polynomials due to Andrews and Askey, see  
\cite[Ex.~1.33]{GaspR}. The coefficient $a_{n,n-1}$ can be obtained by comparing 
coefficients of $x^{n-1}$ on both sides. This gives, after a straightforward calculation,
\begin{equation}\label{eq:expressionannmin1}
a_{n,n-1}\, =\, q^{1-n} \frac{(1-q^n)(1-abq^{n+1})}{(1-q)(1-aq)}
\left( \frac{1-aq^n}{1-abq^{2n}}-\frac{1-aq^{n+1}}{1-abq^{2n+2}} \right)
\end{equation}
Using the orthonormal version we find that in this example the coefficients in 
\eqref{eq:phicombinationofPhi} are 
\begin{equation}\label{eq:defalnbenganlittleqJacobi}
\begin{split}
\al_n  &= 
\frac{q^{\hf n} (1-abq^{n+2})\sqrt{(1-abq^2)(1-abq^3)(1-abq^{n+1})
(1-aq^{n+1})(1-bq^{n+1})}}{(1-abq^{2n+1})(1-abq^{2n+2})\sqrt{(1-abq^{n+4})(1-aq)(1-bq)}}\\
\be_n  &= q^{-\hf n} a^{-\hf} \frac{\sqrt{(1-abq^2)(1-abq^3)(1-q^n)(1-abq^{n+1})
(1-abq^{n+2})}}{(1-q)\sqrt{(1-abq^{n+3})(1-aq)(1-bq)}} \\
&\qquad\times
\left( \frac{1-aq^n}{1-abq^{2n}}-\frac{1-aq^{n+1}}{1-abq^{2n+2}} \right)\\
\ga_n  &= \frac{-bq^{\frac32 n}}{aK} \frac{\sqrt{(1-q^{n-1})(1-q^n)
(1-aq)(1-aq^n)(1-bq)(1-bq^n)}}{(1-abq^{2n-1})(1-abq^{2n})}
\\
\end{split}
\end{equation}

Finally, we need the three-term recurrence relation for the orthonormal 
little $q$-Jacobi polynomials, which corresponds to 
\eqref{eq:3trforgeneralcase} with explicit values 
\begin{equation}\label{eq:3trrlittleqJacobipols}
\begin{split}
\theta_n \, &= \,q^n \frac{\sqrt{aq(1-aq^{n+1})(1-bq^{n+1})(1-q^{n+1})(1-abq^{n+1})(1-abq^{n+2})}}
{(1-abq^{2n+1})(1-abq^{2n+2})\sqrt{1-abq^{n+3}}} \\
\xi_n \, &=\,  \frac{q^n\, (1-aq^{n+1})(1-abq^{n+1})}{(1-abq^{2n+1})(1-abq^{2n+2})} + 
\frac{aq^n \, (1-q^{n})(1-bq^{n})}{(1-abq^{2n})(1-abq^{2n+1})}.
\end{split}
\end{equation}

We next want to use Theorem \ref{thm:genorth2t2MVOP} with the spectral 
decomposition $U$ given by Proposition 
\ref{prop:TspectralcontdualqHahnpls}, so that we assume the situation of 
Proposition \ref{prop:TspectralcontdualqHahnpls}. 
The spectrum is simple, so that $\Om_2=\emptyset$. It remains to 
calculate $U\phi_0$ and $U\phi_1$. Keeping track of normalization we have 
\begin{equation*}
\begin{split}
(U\phi_n)(\cos t) \, =& \, \frac{\sqrt{a(aq,bq,q;q)_\infty}}{\sqrt{(abq^2;q)_\infty}} (bq^2;q)_\infty
\\ &\qquad \times \sum_{k=0}^\infty \frac{q^{\hf k}\, p_n(q^k;a,b;q)}{(q, bq^2;q)_k} 
p_k(\cos t; \sqrt{qb}, \sqrt{qb}, q\sqrt{qb} \mid q)
\end{split}
\end{equation*}
where we have used the standard notation, see \cite{KoekS}, for the continuous dual $q$-Hahn polynomials. 
Using one of the standard generating functions, see \cite[(3.3.15)]{KoekS},  
and $p_1(q^k;a,b;q) = 1 -q^k \frac{(1-abq)}{(1-aq)}$ we find
\begin{equation}
\begin{split}
&F_0(\cos t) \, =\, (U\phi_0)(\cos t) \, = \, \\
&\qquad \qquad \frac{\sqrt{a(aq,bq,q;q)_\infty}}{\sqrt{(abq^2;q)_\infty}} 
\frac{(bq^2;q)_\infty (q\sqrt{b};q)_\infty}{(e^{it}\sqrt{q};q)_\infty}\, 
\rphis{2}{1}{\sqrt{qb}e^{it}, q\sqrt{qb}e^{it}}{bq^2}{q, \sqrt{q}e^{-it}} \\
&F_1(\cos t) \, =\, (U\phi_1)(\cos t) \, = \, 
\frac{\sqrt{a(aq,bq,q;q)_\infty}}{\sqrt{(abq^2;q)_\infty}} (bq^2;q)_\infty \\
&\qquad \qquad\times \Bigl( \frac{ (q\sqrt{b};q)_\infty}{(e^{it}\sqrt{q};q)_\infty}\, 
\rphis{2}{1}{\sqrt{qb}e^{it}, q\sqrt{qb}e^{it}}{bq^2}{q, \sqrt{q}e^{-it}} 
- \\ & \qquad \qquad\qquad \qquad \frac{(1-abq)}{(1-aq)} 
\frac{ (q^2\sqrt{b};q)_\infty}{(e^{it}q\sqrt{q};q)_\infty}\, 
\rphis{2}{1}{\sqrt{qb}e^{it}, q\sqrt{qb}e^{it}}{bq^2}{q, q^{\frac32}e^{-it}} \Bigr) 
\end{split}
\end{equation}
We summarize this situation in the following Proposition \ref{prop:MVOPlittleqJacobipols}, 
using the explicit expression for the orthogonality measure $d\si$ of the continuous dual 
$q$-Hahn polynomials, see \cite[\S 3.3]{KoekS}.

\begin{prop}\label{prop:MVOPlittleqJacobipols}
Define the coefficients $a_n$, $b_n$ and $c_n$ as in Corollary \ref{cor:lemTsiis5trr} 
with the explicit values for $\al_n$, $\be_n$, $\ga_n$ as in \eqref{eq:defalnbenganlittleqJacobi}, 
$\la_n$ as \eqref{eq:differenceoplittleqJacobipols}, $\theta_n$, $\xi_n$ as in 
 \eqref{eq:3trrlittleqJacobipols} and 
$\rho$ and $\tau$ as in Proposition \ref{prop:TspectralcontdualqHahnpls}. 
The $2\times 2$-matrix-valued orthogonal polynomials generated by the three-term 
recurrence relation \eqref{eq:3trr2by2MVOP} with initial conditions
$P_{-1}(\la)=0$, $P_0(\la)=I$ satisfy the orthogonality relations
\[
\frac{(q,qb, q^2b,q^2b;q)_\infty}{2\pi} \int_0^\pi P_n(\cos t) W_1(\cos t) P_m(\cos t)^\ast\, 
\frac{(e^{\pm 2it};q)_\infty }{(qbe^{\pm it}, q^2be^{\pm it}, q^2be^{\pm it};q)_\infty}
dt \, = \, \de_{nm}I
\]
with 
\begin{equation*}
W_1(\cos t) = 
\begin{pmatrix} |F_0(\cos t)|^2 & F_0(\cos t) F_1(\cos t) \\
F_0(\cos t) F_1(\cos t) & |F_1(\cos t)|^2
\end{pmatrix}.
\end{equation*}
\end{prop}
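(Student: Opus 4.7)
The plan is to invoke Theorem~\ref{thm:genorth2t2MVOP} directly in the concrete setup of Section~\ref{ssec:exlittleqJacobipols}. Taking $\cH=L^2(\nu)$, $f_n=\phi_n$ the orthonormal little $q$-Jacobi polynomials, and $T=T^{\rho,\tau}$ with $(\rho,\tau)$ chosen as in Proposition~\ref{prop:TspectralcontdualqHahnpls}, assumptions (a) (self-adjointness and boundedness) and (c) (existence of the unitary $U$ intertwining $T^{\rho,\tau}$ with a multiplication operator) are both contained in Proposition~\ref{prop:TspectralcontdualqHahnpls}, while assumption (b) (five-diagonal form in the basis $\{\phi_n\}$) is Corollary~\ref{cor:lemTsiis5trr}. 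The coefficients $a_n,b_n,c_n$ of the five-term recurrence are then fully explicit via \eqref{eq:defalnbenganlittleqJacobi}, \eqref{eq:differenceoplittleqJacobipols} and \eqref{eq:3trrlittleqJacobipols}, so the three-term recurrence \eqref{eq:3trr2by2MVOP} for the $P_n$ is exactly the one asserted in the proposition.

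Since the continuous dual $q$-Hahn polynomials with parameters $(\sqrt{qb},\sqrt{qb},q\sqrt{qb})$ have simple absolutely continuous spectrum on $[-1,1]$ for $0<b<q^{-1}$, we have $\Om_2=\emptyset$, and the conclusion of Theorem~\ref{thm:genorth2t2MVOP} reduces to a single integral against the scalar measure $d\si$. Under the parametrisation $\la=\cos t$, $t\in[0,\pi]$, I will identify $d\si$ as the normalised orthogonality measure for the orthonormal continuous dual $q$-Hahn polynomials recorded in \cite[\S 3.3]{KoekS}; this produces exactly the prefactor $(q,qb,q^2b,q^2b;q)_\infty/(2\pi)$ together with the infinite-product weight $(e^{\pm 2it};q)_\infty/(qbe^{\pm it}, q^2be^{\pm it}, q^2be^{\pm it};q)_\infty$ appearing in the proposition.

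It then remains to compute the weight matrix $W_1(\cos t)$, which by Theorem~\ref{thm:genorth2t2MVOP} is built from the image functions $U\phi_0$ and $U\phi_1$. These are the functions $F_0$ and $F_1$ already displayed in the paragraph preceding the proposition: they are obtained by applying $U$ to the constant polynomial $\phi_0$ and to the affine polynomial $\phi_1$ (proportional to $p_1(\cdot;a,b;q)$), converting via $U$ into a sum over the orthonormal continuous dual $q$-Hahn polynomials, and evaluating the resulting series using the ${}_2\vp_1$ generating function \cite[(3.3.15)]{KoekS}. Substituting these explicit $F_0,F_1$ into the defining formula for $W_1$ from Theorem~\ref{thm:genorth2t2MVOP} yields the stated weight matrix.

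The main technical obstacle is the closed-form evaluation of the two series defining $F_0$ and $F_1$: this is where the specific choice of the continuous dual $q$-Hahn parameters $(\sqrt{qb},\sqrt{qb},q\sqrt{qb})$ is essential, since only for these parameters does the generating function \cite[(3.3.15)]{KoekS} apply to collapse the sums. Modulo that computation (already carried out in the text preceding the proposition) the proof is essentially a bookkeeping assembly of Proposition~\ref{prop:TspectralcontdualqHahnpls}, Corollary~\ref{cor:lemTsiis5trr}, Theorem~\ref{thm:genorth2t2MVOP} and the standard continuous dual $q$-Hahn weight from \cite[\S 3.3]{KoekS}.
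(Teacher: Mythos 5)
Your proposal is correct and follows essentially the same route as the paper, which itself presents the proposition as a summary of the preceding computations: verify hypotheses (a), (b), (c) of Theorem~\ref{thm:genorth2t2MVOP} via Corollary~\ref{cor:lemTsiis5trr} and Proposition~\ref{prop:TspectralcontdualqHahnpls}, note the spectrum is simple so $\Om_2=\emptyset$, identify $d\si$ with the continuous dual $q$-Hahn orthogonality measure for the parameters $(\sqrt{qb},\sqrt{qb},q\sqrt{qb})$, and assemble $W_1$ from $F_0=U\phi_0$ and $F_1=U\phi_1$ as computed from the generating function \cite[(3.3.15)]{KoekS}. No substantive difference from the paper's argument.
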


In view of \cite[\S 4]{IsmaK} we view the $2\times 2$-matrix-valued polynomials of 
Proposition \ref{prop:MVOPlittleqJacobipols} as matrix-valued analogue of 
(a subfamily) Askey-Wilson polynomials. 
The case $b\leq 0$ can be dealt with similarly, where the case $b=0$ allows for 
additional degrees of freedom.

%%%%%%%%%%%%%%%%%%%%%%%%%%%%%%%%%%%%%%%%%%%%%%%%%%%%%%%%%%%%%%%%%%%%%%%%%%%%%%%%%%%%%%
%%%%%%%%%%%%%%%%%%%%%%%%%%%%%%%%%%%%%%%%%%%%%%%%%%%%%%%%%%%%%%%%%%%%%%%%%%%%%%%%%%%%%%
%%%%%%%%%%%%%%%%%%%%%%%%%%%%%%%%%%%%%%%%%%%%%%%%%%%%%%%%%%%%%%%%%%%%%%%%%%%%%%%%%%%%%%

\section{Example: spectral decomposition of an operator arising from quantum groups}\label{sec:exrhots}

In an influential paper \cite{Koor} Koornwinder has introduced a special element 
$\rho_{\tau, \si}$ in the quantum $SU(2)$ group. In this context it is important 
the have the action of this element in an infinite dimensional representation as an 
explicit $5$-term recurrence relation. On the other hand, the spectral decomposition
of the corresponding operator has been solved in \cite{KoelV} exploiting the special 
case $\si\to\infty$ as an intermediate step. So the spectral decomposition of the $5$-term 
recurrence is completely known, and by the set-up of Theorem \ref{thm:genorth2t2MVOP} we 
obtain orthogonality relations for $2\times 2$-matrix-valued orthogonal polynomials with 
explicit coefficients for the three-term recurrence relation. The resulting 
Proposition \ref{prop:rhostMVOP} describes the weight function  explicitly in terms of 
${}_2\vp_1$-series. 

Throughout this section we assume $\si,\tau\in\R$.
In this case the Hilbert space is $\cH=\ell^2(\N)$ with standard orthonormal 
basis $\{ f_n\}_{n=0}^\infty$. The operator $T$ corresponds to the operator 
$\pi_\phi(\rho_{\tau,\si})$ of \cite[\S 6]{KoelV}; explicitly in the notation 
of \eqref{eq:Tis5term} we have in this case 
\begin{equation}\label{eq:rhostanbncc}
\begin{split}
a_n\, &= \, \frac12 \sqrt{(1-q^{2n+2})(1-q^{2n+4})}, \\
b_n\, &= \, \frac12 iq^{n+1}\sqrt{1-q^{2n+2}}\bigl( e^{i\phi}(q^{-\si}-q^{\si})
+ e^{-i\phi}(q^{-\tau}-q^{\tau})\bigr) \\
c_n\, &= \, q^{1+2n}\bigl( \cos(2\phi) - \frac12 (q^{-\si}-q^{\si})(q^{-\tau}-q^{\tau})\bigr)
\end{split}
\end{equation}
Note that $a_{-1}=a_{-2}=b_{-1}=0$. Moreover, we have a symmetry  
$(\si,\tau,\phi)\leftrightarrow (\tau,\si,-\phi)$ and $(\si,\tau,\phi)\leftrightarrow 
(-\si,-\tau,\phi+\pi)$. So we can  assume $\si\geq \tau$ and $\si\geq -\tau$. From 
\cite[\S 6]{KoelV} we deduce that $T$ has absolutely continuous spectrum $[-1,1]$ 
of multiplicity $2$ and discrete spectrum (possibly empty) of multiplicity $1$ at 
$\Sigma_-\cup \Sigma_+$, where, using the notation $\mu(x) =\frac12(x+x^{-1})$,  
\begin{equation}\label{eq:rhostdiscretespectrum}
\begin{split}
\Sigma_- \, =&\, \{ \mu(-q^{1-\si-\tau+2k}) \, \mid\, k\in \N, \ q^{1-\si-\tau+2k}>1 \}  \\
\Sigma_+ \, =&\, \{ \mu(q^{1-\si+\tau+2k}) \, \mid\, k\in \N, \ q^{1-\si+\tau+2k}>1 \}.
\end{split}
\end{equation}
From \cite[\S 6]{KoelV} we can read off $L^2(\cV)$. Assume that $\si+\tau\leq 1$, 
$\si-\tau\leq 1$, so that there is no discrete spectrum. Then $V$ is a diagonal 
matrix with the orthonormal measure for 
the Al-Salam--Chihara polynomials with parameters $(q^{1+\si-\tau}, -q^{1-\si-\tau})$, 
respectively 
$(q^{1-\si+\tau}, -q^{1+\si+\tau})$, on the $(1,1)$-entry, respectively the $(2,2)$-entry. Explicitly,
$f\colon [-1,1]\to \C^2$ is in $L^2(\cV)$ if
\begin{equation}
\int_0^\pi |f_1(\cos t)|^2 v_{11}(\cos t) + |f_2(\cos t)|^2 v_{22}(\cos t)  \, dt \, < \, \infty
\end{equation}
with 
\begin{equation}
\begin{split}
&v_{11}(\cos t) = v_{11}(\cos t; q^\tau, q^\si\mid q^2) = 
\frac{(q^2, -q^{2-2\tau};q^2)_\infty (e^{\pm 2 it};q^2)_\infty}{2\pi (-q^{2\tau};q^2)_\infty
 (q^{1+\si-\tau} e^{\pm it}, -q^{1-\si-\tau} e^{\pm it};q^2)_\infty} \\
&v_{22}(\cos t) = v_{11}(\cos t; q^{-\tau}, q^{-\si}\mid q^2)
\end{split}
\end{equation}
and so $v_{12}(\cos t)=0=v_{21}(\cos t)$.

In order to write down the orthogonality measure for the $2\times 2$-matrix-valued 
orthogonal polynomials from Theorem \ref{thm:genorth2t2MVOP} we need to calculate $Uf_k$ 
for $k=0$ and $k=1$. Expanding the standard orthonormal basis into the basis 
$\{ w^\phi_m, u^\phi_m\}_{m=0}^\infty$ as in \cite[p.~410]{KoelV}, and applying $U$, 
which is given by 
$(\La_1,\La_2)$ as in \cite[p.~411]{KoelV}, we get after a straightforward calculation that 
$Uf_k(\la) = \bigl( U_1f_k(\la), U_2f_k(\la)\bigr)^t$ with 
\begin{equation}\label{eq:rhostexplicitUonfk}
\begin{split}
U_1f_k(\la) \, &=\, \sum_{m=0}^\infty \frac{i^{-k}e^{-ik\phi} p_k(-q^{2m})}{\| v^\phi_{-q^{2m}}\|} 
e^{2im\phi}\, h_m(\la; q^\tau, q^\si\mid q^2) \\
U_2f_k(\la) \, &=\, \sum_{m=0}^\infty 
\frac{i^{-k}e^{-ik\phi} p_k(q^{2\tau+2m})}{\| v^\phi_{q^{2m+2\tau}}\|} 
e^{2im\phi}\, h_m(\la; q^{-\tau}, q^{-\si}\mid q^2) 
\end{split}
\end{equation}
where we have used the notation as in \cite[Prop.~5.2, p.~410]{KoelV} for the 
length of the vector, the Al-Salam--Carlitz polynomials 
$p_k(\cdot)$ and the Al-Salam--Chihara polynomials $h_m(\cdot)$. 

For $k=0$ we can use the generating function, see e.g. \cite[(3.8.14)]{KoekS}, directly
to find
\begin{equation}\label{eq:rhostUf0explicit}
\begin{split}
&F_{1,0}(\cos t; q^\tau, q^\si\mid q^2) = (U_1f_0)(\cos t) \\ =&\,   
\frac{1}{(-q^{2\tau};q^2)_\infty^\hf (qe^{i(t+2\phi)};q^2)_\infty}
\, \rphis{2}{1}{q^{1+\si-\tau}e^{it},-q^{1-\si-\tau}e^{it}}{-q^{2-2\tau}}{q^2,qe^{i(2\phi-t)}}
\end{split}
\end{equation}
and $(U_2f_0)(\cos t)=F_{1,0}(\cos t; q^{-\tau}, q^{-\si}\mid q^2)$ 
is obtained from $(U_1f_0)(\cos t)$ by replacing $(\si,\tau)$ by $(-\si,-\tau)$. 

For $k=1$ we have to take a linear combination. First, note, in the notation of \cite[Prop.~5.2]{KoelV},
$p_1(x) = q^{-\tau}(1-q^2)^{-\hf}(x+1-q^{2\tau})$, so that $p_1(-q^{2m})=-p_1(q^{2m+2\tau})$. 
In particular, we obtain, also using \cite[Prop.~5.2]{KoelV}, that $(U_2f_1)(\cos t)$ 
is obtained from $(U_1f_1)(\cos t)$ by switching $(\si,\tau)$ to $(-\si,-\tau)$ and multiplying 
by $-1$. 
Using the same generating function for the Al-Salam--Chihara polynomials twice we obtain
\begin{equation}\label{eq:rhostUf1explicit}
\begin{split}
&F_{1,1}(\cos t; q^\tau, q^\si \mid q^2) = (U_1f_1)(\cos t) \, = \\  
&\frac{-ie^{-i\phi}q^{-\tau}}{\sqrt{(1-q^2)(-q^{2\tau};q^2)_\infty}}
\Bigl( \frac{-1}{(q^3 e^{i(t+2\phi)};q^2)_\infty} 
\rphis{2}{1}{q^{1+\si-\tau}e^{it},-q^{1-\si-\tau}e^{it}}{-q^{2-2\tau}}{q^2,q^3e^{i(2\phi-t)}} \\
& \qquad \qquad + \frac{(1-q^{2\tau})}{(q e^{i(t+2\phi)};q^2)_\infty} 
\rphis{2}{1}{q^{1+\si-\tau}e^{it},-q^{1-\si-\tau}e^{it}}{-q^{2-2\tau}}{q^2,qe^{i(2\phi-t)}} \Bigr)
\end{split}
\end{equation}
and $(U_2f_1)(\cos t) = -F_{1,1}(\cos t;q^{-\tau}, q^{-\si}\mid q^2)$.

\begin{prop}\label{prop:rhostMVOP}
Consider the matrix-valued polynomials $P_n$ generated by 
\eqref{eq:3trr2by2MVOP} with initial conditions $P_{-1}(\la)=0$, $P_0(\la)=I$ and 
where the entries of the matrices $A_n$ and $B_n$ are given by \eqref{eq:rhostanbncc} 
with $\si\geq \tau$, $\si\geq -\tau$. Assume moreover $\si+\tau\leq 1$, $\si-\tau\leq 1$, 
then the matrix-valued 
 polynomials $P_n$ satisfy the orthogonality relations
\[
\begin{split}
& \int_0^\pi P_n(\cos t) W_2(\cos t) P_m(\cos t)^\ast\, dt \, = \, \de_{nm}I, \\
& W_2(\cos t)_{11}\, =\, |F_{1,0}(\cos t;q^{\tau}, q^{\si}\mid q^2)|^2 
v_{11}(\cos t; q^\tau, q^\si) + \bigl( (\si,\tau) \leftrightarrow (-\si,-\tau)\bigr)\\
& W_2(\cos t)_{21}\, =\, W_2(\cos t)_{12}\, =\, F_{1,0}(\cos t;q^{\tau}, q^{\si}\mid q^2)
 v_{11}(\cos t; q^\tau, q^\si) 
F_{1,1}(\cos t;q^{\tau}, q^{\si}\mid q^2) \\ & \qquad\qquad -  \bigl( (\si,\tau) 
\leftrightarrow (-\si,-\tau)\bigr)\\
& W_2(\cos t)_{22}\, =\, |F_{1,1}(\cos t;q^{\tau}, q^{\si}\mid q^2)|^2 
v_{11}(\cos t; q^\tau, q^\si) + \bigl( (\si,\tau) \leftrightarrow (-\si,-\tau)\bigr)
\end{split}
\]
where the functions on the right hand side are defined by 
\eqref{eq:rhostUf0explicit}, \eqref{eq:rhostUf1explicit} and the 
notation $\bigl( (\si,\tau) \leftrightarrow (-\si,-\tau)\bigr)$ means 
that we have to add the same term but with parameters 
$(\si,\tau)$ replaced by $(-\si,-\tau)$. 
\end{prop}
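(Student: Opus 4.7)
The plan is to apply Theorem \ref{thm:genorth2t2MVOP} to the operator $T=\pi_\phi(\rho_{\tau,\si})$, drawing on the spectral decomposition worked out in \cite{KoelV}; essentially all the ingredients have been compiled in the paragraphs preceding the proposition, so the proof reduces to verifying the hypotheses and assembling the pieces.

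First I would verify the assumptions (a), (b), (c) of Section \ref{sec:5trrMVOP}. Boundedness and self-adjointness of $T$ on $\cH=\ell^2(\N)$ are immediate from \eqref{eq:rhostanbncc}: the sequences $(a_n),(b_n),(c_n)$ are bounded, with $a_n>0$ and $c_n\in\R$, and the action \eqref{eq:Tis5term} on the standard basis $\{f_n\}$ is by construction the prescribed five-term expression. So (a) and (b) hold. For (c) I invoke \cite[\S 6]{KoelV}: under the standing constraints $\si\geq|\tau|$ and $\si\pm\tau\leq 1$, the set $\Sigma_-\cup\Sigma_+$ in \eqref{eq:rhostdiscretespectrum} is empty, so the spectrum equals $[-1,1]$ with uniform multiplicity two. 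Hence $\Om_1=\emptyset$, $\Om_2=[-1,1]$, and the unitary $U=(\La_1,\La_2)$ from \cite[p.~411]{KoelV}, together with the diagonal matrix weight $V$ whose entries $v_{11},v_{22}$ are the two Al-Salam--Chihara densities, realizes (c).

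The substantive work is the explicit evaluation of $Uf_0$ and $Uf_1$ in \eqref{eq:rhostUf0explicit} and \eqref{eq:rhostUf1explicit}. I would start from \eqref{eq:rhostexplicitUonfk}, where $f_k$ has been expanded in the mixed basis $\{w^\phi_m,u^\phi_m\}$ of \cite[p.~410]{KoelV} with coefficients involving the Al-Salam--Carlitz values $p_k$, and then mapped by $U$ to the Al-Salam--Chihara polynomials $h_m(\cdot;q^{\pm\tau},q^{\pm\si}\mid q^2)$. For $k=0$, since $p_0\equiv 1$, the resulting sum is a single application of the Al-Salam--Chihara generating function \cite[(3.8.14)]{KoekS}, yielding the $\,{}_2\vp_1$ in \eqref{eq:rhostUf0explicit}. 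For $k=1$, since $p_1(x)=q^{-\tau}(1-q^2)^{-\hf}(x+1-q^{2\tau})$ is affine in $x$, the sum splits into two instances of the same generating function at two values of the auxiliary variable, giving the two $\,{}_2\vp_1$'s in \eqref{eq:rhostUf1explicit}. The parameter-swap symmetry $(\si,\tau)\leftrightarrow(-\si,-\tau)$ linking $U_2f_k$ to $U_1f_k$, together with the extra sign for $k=1$ coming from $p_1(-q^{2m})=-p_1(q^{2m+2\tau})$, follows from \cite[Prop.~5.2]{KoelV}.

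With these computations in hand, Theorem \ref{thm:genorth2t2MVOP} applies directly. Since $\Om_1=\emptyset$, only the $\Om_2$-integral survives, and after $\la=\cos t$ it becomes the integral over $[0,\pi]$ in the statement. Because $V$ is diagonal, $\langle x,y\rangle_{V(\la)}=\bar x_1 v_{11}y_1+\bar x_2 v_{22}y_2$ separates, so $(W_2)_{11}$ is the sum of $|F_{1,0}(\cos t;q^\tau,q^\si\mid q^2)|^2 v_{11}(\cos t;q^\tau,q^\si)$ and its $(\si,\tau)\leftrightarrow(-\si,-\tau)$ counterpart, and similarly for $(W_2)_{22}$. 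For the off-diagonal entry the extra sign in $U_2f_1=-F_{1,1}(\cos t;q^{-\tau},q^{-\si}\mid q^2)$ flips the second contribution, producing the minus sign in the stated formula. The main obstacle I anticipate is the bookkeeping in the previous step: tracking normalizations, the factors $i^{-k}e^{-ik\phi}$, and the various sign and parameter conventions so that the closed forms emerge with the correct arguments; once that is settled, Theorem \ref{thm:genorth2t2MVOP} assembles the result mechanically.
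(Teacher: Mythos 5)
Your proposal is correct and follows essentially the same route as the paper: verify hypotheses (a)--(c) through the spectral analysis of $\pi_\phi(\rho_{\tau,\sigma})$ in \cite[\S 6]{KoelV} (noting the discrete spectrum is empty under $\sigma\pm\tau\leq 1$, so $\Omega_1=\emptyset$ and $\Omega_2=[-1,1]$ with diagonal $V$), evaluate $Uf_0$ and $Uf_1$ via the Al-Salam--Chihara generating function with the sign coming from $p_1(-q^{2m})=-p_1(q^{2m+2\tau})$, and then apply Theorem \ref{thm:genorth2t2MVOP}. This is exactly the derivation the paper carries out in the text preceding the proposition.
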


Note that $W_2(\cos t)_{ij}$ is explicit as a sum of $i+j$ terms, 
each term being a product of two ${}_2\vp_1$-series.

In case the assumption $\si+\tau\leq 1$, $\si-\tau\leq 1$ is dropped we 
obtain a finite discrete set of mass points in the orthogonality relations of 
Proposition \ref{prop:rhostMVOP}, and the weight $W_1$ at these points can be 
calculated in the same way  from Theorem \ref{thm:genorth2t2MVOP}. Alternatively, 
they can be obtained from writing the integral of Proposition \ref{prop:rhostMVOP} 
as a contour integral, and then shifting contours which leads to discrete masses at 
the poles with weights given in terms of residues analogous to the case of the 
Askey-Wilson polynomials, see \cite{AskeW}.

\emph{Acknowledgement.} 
The research of Mourad E.H. Ismail is supported by a Research Grants
Council of
Hong Kong  under contract \# 101411 and the  NPST Program of King Saud
University, project number 10-MAT1293-02,

This work was partially supported by a grant from the `Collaboration Hong Kong - Joint 
Research Scheme' sponsored by the Netherlands Organisation of Scientific Research and 
the Research Grants Council fo Hong Kong (Reference number: 600.649.000.10N007).

%%%%%%%%%%%%%%%%%%%%%%%%%%%%%%%%%%%%%%%%%%%%%%%%%%%%%%%%%%%%%%%%%%%%%%%%%%%%%%%%%%%%%%
%%%%%%%%%%%%%%%%%%%%%%%%%%%%%%%%%%%%%%%%%%%%%%%%%%%%%%%%%%%%%%%%%%%%%%%%%%%%%%%%%%%%%%
%%%%%%%%%%%%%%%%%%%%%%%%%%%%%%%%%%%%%%%%%%%%%%%%%%%%%%%%%%%%%%%%%%%%%%%%%%%%%%%%%%%%%%

\end{document}